\definecolor{webgreen}{rgb}{0,.5,0}
\definecolor{webbrown}{rgb}{.6,0,0}
\newtheorem{theorem}{Theorem}
\newtheorem{corollary}[theorem]{Corollary}
\newtheorem{remark}[theorem]{Remark}
\newenvironment{proof}[1][Proof]{\noindent\textbf{#1.} }{\ \rule{0.5em}{0.5em}}
\begin{document}

\begin{center}
\vskip1cm

{\LARGE \textbf{Kernels for composition of positive linear operators}}

\vspace{2cm}

{\large Ulrich Abel}\\[0pt]
\textit{Technische Hochschule Mittelhessen}\\[0pt]
\textit{Fachbereich MND}\\[0pt]
\textit{Wilhelm-Leuschner-Stra\ss e 13, 61169 Friedberg }\\[0pt]
\textit{Germany}\\[0pt]
\href{mailto:Ulrich.Abel@mnd.thm.de}{\texttt{Ulrich.Abel@mnd.thm.de}}

\vspace{1cm}

{\large Ana Maria Acu}\\[0pt]
\textit{Lucian Blaga University of Sibiu}\\[0pt]
\textit{Department of Mathematics and Informatics}\\[0pt]
\textit{Romania}\\[0pt]
\textit{Str. Dr. Ion Ra\c tiu, No. 5-7, 550012 Sibiu}\\[0pt]
\textit{Romania}\\[0pt]
\href{mailto:anamaria.acu@ulbsibiu.ro}{\texttt{anamaria.acu@ulbsibiu.ro}}

\vspace{1cm}

{\large Margareta Heilmann}\\[0pt]
\textit{University of Wuppertal}\\[0pt]
\textit{School of Mathematics and Natural Sciences}\\[0pt]
\textit{Gau\ss straße 20, 42119 Wuppertal}\\[0pt]
\textit{Germany}\\[0pt]
\href{mailto:heilmann@math.uni-wuppertal.de}{\texttt{heilmann@math.uni-wuppertal.de}}

\vspace{1cm}

{\large Ioan Ra\c sa}\\[0pt]
\textit{Technical University of Cluj-Napoca}\\[0pt]
\textit{Faculty of Automation and Computer Science, Department of Mathematics}\\[0pt]
\textit{Str. Memorandumului nr. 28, 400114 Cluj-Napoca}\\[0pt]
\textit{Romania}\\[0pt]
\href{mailto:ioan.rasa@math.utcluj.ro}{\texttt{ioan.rasa@math.utcluj.ro}}

\end{center}

\vspace{2cm}

{\large \textbf{Abstract.}}

\bigskip

This paper investigates the composition of Bernstein--Durrmeyer operators and Szász--Mirakjan--Durrmeyer operators, focusing on the structure and properties of the associated kernel functions. In the case of the Bernstein--Durrmeyer operators, we establish new identities for the kernel arising from the composition of two and three operators, from which the commutativity of these operators follows naturally. Building on the eigenstructure of the Bernstein–Durrmeyer operator $M_n$, we obtain a representation of the iterate $M_n^r$ as a linear combination of the operators $M_k$, for $k=0,1,\dots,n$. We also address the composition of Szász--Mirakjan--Durrmeyer operators and revisit a known result giving an elementary  proof. 

\bigskip

\smallskip \emph{Mathematics Subject Classification (2020):} 41A36 %% Approximation by positive operators.

\smallskip \emph{Keywords:} Approximation by positive operators, 
composition of positive linear operators, iterates, kernels of integral operators, Bernstein--Durrmeyer operators, Szász--Mirakjan--Durrmeyer operators
\vspace{2cm}

%%%%%%%%%%%%%%%%%%%%%%%%%%%%%%%%%%%%%%%%%%%%%%%%%%%%%%%%%%%%%%%%%%%

\section{Introduction}

\label{intro}

Integral operators play a central role in approximation theory, functional
analysis, and various applications in numerical analysis. Classical examples
of such operators include the Bernstein-Durrmeyer and Sz\'{a}%
sz-Mirakjan-Durrmeyer operators, whose properties have been extensively
studied in the literature.
% (see, for example, \cite{1, 7, 3,8,9,6,2} and the
%references therein). In recent years, these operators have also found
%applications in solving real-world problems, particularly in connection with
%sampling theory (see, e.g., \cite{5}, \cite{4}). 
Within the framework of
integral operators, key topics of interest include their kernels, iterates,
and compositions.

In this paper, we investigate a family of integral operators defined by 
\begin{equation*}
(L_n f)(x) = \int_I f(y) K_n(x,y) dy
\end{equation*}
with $I=[0,1]$ or $I=[0,\infty)$ and $K_n(x,y)$ a given kernel function.

A key object of interest in our study is the composition of $r$ such
operators. Specifically, we consider the composition 
\begin{equation*}
((L_{n_{r}}\circ \cdots \circ L_{n_{1}})f)(x)=\int_{I}f(t_{1})K_{n_{r},\dots
,n_{1}}(t_{r+1},t_{1})dt_{1}
\end{equation*}%
where the kernel is recursively given by 
\begin{equation*}
K_{n_{r},\dots ,n_{1}}(t_{r+1},t_{1})=\int_{I}K_{n_{r-1},\dots
,n_{1}}(t_{r},t_{1})\cdot K_{n_{r}}(t_{r+1},t_{r})dt_{r}.
\end{equation*}

In Section \ref{sec2} we consider the Bernstein--Durrmeyer operators $M_n$
and derive new nice identities for the kernel function in the case of
composing two respectively three of these operators. From these identities
one deduces immediately that involved operators commute. Section \ref{sec3}
is devoted to a new approach, using the eigenstructure of $M_n$, for
studying the kernel function when $r$ Bernstein--Durrmeyer operators are
composed.  As a by-product we get a representation of the iterate $M_n^r$ as
a linear combination of $M_k$, $k=0,1,\dots,n$. In Section \ref{sec4} we
deal with the Durrmeyer modification of the Sz\'{a}sz--Mirakjan operators
and present an elementary proof of a result given by Abel and Ivan in \cite%
{Abel-Ivan-Varna-CTF2005-proc-2006}. From this result it follows in
particular that composing two Sz\'{a}sz--Mirakjan--Durrmeyer operators we
get an operator from the same family.

\section{The case of Bernstein--Durrmeyer operators}

\label{sec2}

Let $L_{1}[0,1]$ denote the space of Lebesgue integrable functions on the
interval $[0,1]$ and $\Pi _{n}$ the space of polynomials of degree at most $%
n $.

The so called Bernstein--Durrmeyer operators $M_{n}:L_{1}[0,1]%
\longrightarrow \Pi _{n}$ are defined by 
\begin{eqnarray*}
(M_{n}f)(x) &=&\int_{0}^{1}f(y)K_{n}(x,y)dy,\, \,x\in \lbrack 0,1], \\
K_{n}(x,y) &=&(n+1)\sum_{k=0}^{n}p_{n,k}(x)p_{n,k}(y),\,p_{n,k}(x)=\binom{n}{%
k}x^{k}(1-x)^{n-k}.
\end{eqnarray*}%
In 1981 Derriennic \cite[Th\'{e}or\`{e}me III.3]{De1981} proved that the
eigenfunctions of the Bernstein-Durrmeyer operators are the
(orthonormalized) Legendre polynomials on the interval $[0,1]$, i.e., 
\begin{equation*}
Q_{0}(x)=1,\,Q_{k}(x)=\frac{\sqrt{2k+1}}{k!}\left( \frac{d}{dx}\right)
^{k}\left( x^{k}\left( 1-x\right) ^{k}\right)
\end{equation*}%
with corresponding eigenvalues 
\begin{equation*}
\lambda _{n,k}=\frac{n^{\underline{k}}}{\left( n+k+1\right) ^{\underline{k}}}
\end{equation*}%
and deduced the representation of the operators in terms of these
eigenfunctions, i.e., 
\begin{equation*}
M_{n}f=\sum_{k=0}^{n}\lambda _{n,k}\left \langle Q_{k},f\right \rangle Q_{k}
\end{equation*}%
with the inner product $\left \langle f,g\right \rangle
=\int_{0}^{1}f(t)g(t)dt $.

Ditzian and Ivanov \cite{DiIv1989} remarked that from this result it follows
immediately that the operators commute. In fact, we have 
\begin{equation*}
(M_{m}\circ M_{n})f=\sum_{k=0}^{\min \{m,n\}}\lambda _{m,k}\lambda
_{n,k}\left \langle Q_{k},f\right \rangle Q_{k}.
\end{equation*}

It is possible to represent $K_{m,n}\left( x,y\right)$ in terms of Bernstein basis polynomials. An application of Leibniz' rule for derivatives yields 
\begin{equation}  \label{eq.M1}
Q_{k}\left( x\right) =\sqrt{2k+1}\sum_{j=0}^{k}\left( -1\right) ^{j}\binom{k 
}{j}p_{k,j}\left( x\right).
\end{equation}
Starting from 
\begin{equation*}
K_{m,n}\left( x,y\right) =\sum_{k=0}^{\min \{m,n\}}\lambda _{m,k}\lambda
_{n,k}Q_{k}\left( x\right) Q_{k}\left( y\right)
\end{equation*}%
and using (\ref{eq.M1}), we obtain 
\begin{eqnarray*}
K_{m,n}\left( x,y\right) &=&\sum_{k=0}^{\min \left \{ m,n\right \} }\frac{m^{%
\underline{k}}}{\left( m+k+1\right) ^{\underline{k}}}\frac{n^{\underline{k}}%
}{\left( n+k+1\right) ^{\underline{k}}}\left( 2k+1\right) \\
&&\times \sum_{i=0}^{k}\left( -1\right) ^{i}\binom{k}{i}p_{k,i}\left(
x\right) \sum_{j=0}^{k}\left( -1\right) ^{j}\binom{k}{j}p_{k,j}\left(
y\right) .
\end{eqnarray*}
In this representation all possible products $p_{k,i}\left(x\right) p_{k,j}\left(y \right), \ 0\le i,j \le k \le \min \{m,n\}$, appear.

Now we are going to derive a different representation of $K_{m,n}\left(x,y\right) $, which contains only products 
$p_{k,\ell}\left(x\right) p_{k,\ell}\left(y \right), \ 0\le \ell \le k \le \min \{m,n\}$. 
To our best knowledge it is new. 

\begin{theorem}
\label{P1} \label{corollary-kernel-K_m,n[-1]}For $m,n\geq 0$, the kernel of
the composition of classical Bernstein--Durrmeyer operators $M_{m}\circ
M_{n} $ has the representation 
\begin{equation*}
K_{m,n}\left( x,y\right) =\frac{\left( m+1\right) !\left( n+1\right) !}{%
\left( m+n+1\right) !}\sum_{k=0}^{\min \{m,n\}}\binom{m}{k}\binom{n}{k}%
\sum_{\ell =0}^{k}p_{k,\ell }\left( x\right) p_{k,\ell }\left( y\right) .
\end{equation*}
\end{theorem}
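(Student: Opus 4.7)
The plan is to move from the Bernstein basis to the Legendre eigenbasis, where both sides of the claimed identity become diagonal and the statement reduces to a single application of Chu--Vandermonde. The first step is to rewrite the proposed right-hand side in a more transparent form. From the definition $K_k(x,y)=(k+1)\sum_{\ell=0}^{k}p_{k,\ell}(x)p_{k,\ell}(y)$, the assertion becomes
\[
K_{m,n}(x,y)=\frac{(m+1)!(n+1)!}{(m+n+1)!}\sum_{k=0}^{\min\{m,n\}}\frac{\binom{m}{k}\binom{n}{k}}{k+1}K_{k}(x,y),
\]
i.e.\ the composition kernel is a specific linear combination of the base kernels $K_0,\dots,K_{\min\{m,n\}}$.

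Next I would project onto the Legendre eigenbasis. Since $M_kQ_j=\lambda_{k,j}Q_j$ for $j\le k$ and $M_kQ_j=0$ for $j>k$ (the latter from Derriennic's representation $M_kf=\sum_{i=0}^{k}\lambda_{k,i}\langle Q_i,f\rangle Q_i$ together with orthogonality of the $Q_i$), both kernels already noted in the introduction lie in the diagonal subspace spanned by $\{Q_j\otimes Q_j:0\le j\le\min\{m,n\}\}$, with $K_{m,n}=\sum_{j=0}^{\min\{m,n\}}\lambda_{m,j}\lambda_{n,j}\,Q_j\otimes Q_j$. Matching the coefficient of $Q_j(x)Q_j(y)$ on both sides reduces the theorem to the scalar identity
\[
\lambda_{m,j}\lambda_{n,j}=\frac{(m+1)!(n+1)!}{(m+n+1)!}\sum_{k=j}^{\min\{m,n\}}\frac{\binom{m}{k}\binom{n}{k}}{k+1}\lambda_{k,j},\qquad 0\le j\le\min\{m,n\}.
\]

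Finally, I would substitute the closed form $\lambda_{n,k}=n!(n+1)!/[(n-k)!(n+k+1)!]$, cancel the common factor $m!(m+1)!\,n!(n+1)!$, and change variables via $a=m-j$, $b=n-j$, $s=2j+1$, $p=k-j$. The identity collapses to
\[
\frac{(a+b+s)!}{a!\,b!\,(a+s)!\,(b+s)!}=\sum_{p=0}^{\min\{a,b\}}\frac{1}{p!(a-p)!(b-p)!(p+s)!}.
\]
Regrouping the factorials as $1/[p!(a-p)!]=\binom{a}{p}/a!$ and $1/[(b-p)!(p+s)!]=\binom{b+s}{b-p}/(b+s)!$, the right-hand side becomes $\frac{1}{a!(b+s)!}\sum_{p}\binom{a}{p}\binom{b+s}{b-p}$, which by Chu--Vandermonde equals $\binom{a+b+s}{b}/[a!(b+s)!]=(a+b+s)!/[a!\,b!\,(a+s)!\,(b+s)!]$, the left-hand side.

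The main obstacle is conceptual rather than computational: one must spot that the target identity should first be re-encoded as a linear relation among the $K_k$, so that both sides are simultaneously diagonalized by the Legendre basis, after which the eigenvalue comparison is a one-line Vandermonde calculation. A direct Bernstein-basis attack---computing $K_{m,n}$ via the beta integral $\int_0^1 p_{m,i}(t)p_{n,j}(t)\,dt=\binom{m}{i}\binom{n}{j}/[(m+n+1)\binom{m+n}{i+j}]$ and then re-expressing mixed-degree products $p_{m,i}(x)p_{n,j}(y)$ in terms of equal-degree products $p_{k,\ell}(x)p_{k,\ell}(y)$---requires the same combinatorial input in a considerably less transparent form.
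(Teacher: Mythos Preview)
Your argument is correct and genuinely different from the paper's. The paper proves the theorem by a direct Bernstein-basis computation: it evaluates the integral $\int_0^1 p_{m,\mu}(t)p_{n,\nu}(t)\,dt$ via the beta function, then writes the falling factorials $(m+n-\mu-\nu)^{\underline{m-\mu}}(\mu+\nu)^{\underline{\mu}}$ as derivatives of $s^{m-\mu}t^{\mu}[ty+s(1-y)]^{n}$ at $s=t=1$, applies Leibniz' rule, interchanges the order of summation, and finally collapses the remaining sum using (\ref{eq.w1}). No Legendre polynomials and no named hypergeometric identity appear; the combinatorics is hidden in the differentiation trick. Your route instead diagonalizes both sides simultaneously in the Legendre eigenbasis---essentially the viewpoint the paper later adopts in Section~\ref{sec3} for general $r$---and then identifies the resulting scalar relation with Chu--Vandermonde after the substitution $a=m-j$, $b=n-j$, $s=2j+1$, $p=k-j$. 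What your approach buys is transparency and a clear explanation of \emph{why} the coefficients take this binomial form; what the paper's approach buys is self-containment (it never invokes the eigenstructure for this theorem) and a template that extends mechanically to the $r=3$ case in the next theorem. It is worth noting that your method also gives an explicit solution of the linear system (\ref{eq.l1}) for $r=2$, something the paper obtains only indirectly.
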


In our calculations we will use the following well-known formulas 
\begin{align}
& \displaystyle \sum_{k=0}^{n}p_{n,k}(x)=1,  \label{eq.w1} \\
& \int_{0}^{1}p_{n,k}\left( t\right) dt=\frac{1}{n+1}, \text{ \quad } k=0,\ldots,n ,  \label{eq.w2} \\
& p_{m,\mu }\left( t\right) p_{n,\nu }\left( t\right) =\frac{\binom{m}{\mu }%
\binom{n}{\nu }}{\binom{m+n}{\mu +\nu }}p_{m+n,\mu +\nu }\left( t\right) .
\label{eq.w3}
\end{align}
Furthermore, we make frequent use of the falling factorials denoted by $z^{\underline{0}}=1$, $z^{\underline{r}}=z\left( z-1\right) \cdots
\left( z-r+1\right) $, for $r\in \mathbb{N}$.

\begin{proof}[{Proof of Theorem \protect\ref{corollary-kernel-K_m,n[-1]}}]
Using (\ref{eq.w2}) and (\ref{eq.w3}), we obtain 
\begin{eqnarray*}
K_{m,n}\left( x,y\right) &=&\left( m+1\right) \left( n+1\right) \sum_{\mu
=0}^{m}p_{m,\mu }\left( x\right) \sum_{\nu =0}^{n}p_{n,\nu }\left( y\right)
\int_{0}^{1}p_{m,\mu }\left( t\right) p_{n,\nu }\left( t\right) dt \\
&=&\frac{\left( m+1\right) \left( n+1\right) }{m+n+1}\sum_{\mu
=0}^{m}p_{m,\mu }\left( x\right) \sum_{\nu =0}^{n}p_{n,\nu }\left( y\right) 
\frac{\binom{m}{\mu }\binom{n}{\nu }}{\binom{m+n}{\mu +\nu }} \\
&=&\frac{\left( m+1\right) !\left( n+1\right) !}{\left( m+n+1\right) !}%
\sum_{\mu =0}^{m}p_{m,\mu }\left( x\right) \dfrac{1}{(m-\mu )!\mu !} \\
&&\times \sum_{\nu =0}^{n}p_{n,\nu }\left( y\right) (m+n-\mu -\nu )^{%
\underline{m-\mu }}(\mu +\nu )^{\underline{\mu }}.
\end{eqnarray*}%
For the inner sum of $K_{m,n}\left( x,y\right) $ we have 
\begin{eqnarray}
&&\sum_{\nu =0}^{n}p_{n,\nu }\left( y\right) (m+n-\mu -\nu )^{\underline{%
m-\mu }}(\mu +\nu )^{\underline{\mu }}  \label{eq.ww1} \\
&=&\left( \frac{\partial }{\partial s}\right) ^{m-\mu }\left( \frac{\partial 
}{\partial t}\right) ^{\mu }\left. s^{m-\mu }t^{\mu }\sum_{\nu
=0}^{n}p_{n,\nu }\left( y\right) s^{n-\nu }t^{\nu }\right \vert _{s=t=1} 
\notag \\
&=&\left. \left( \frac{\partial }{\partial s}\right) ^{m-\mu }\left( \frac{%
\partial }{\partial t}\right) ^{\mu }\left( s^{m-\mu }t^{\mu }\left[
ty+s\left( 1-y\right) \right] ^{n}\right) \right \vert _{s=t=1}  \notag \\
&=&\sum_{i=0}^{\mu }\binom{\mu }{i}\sum_{j=0}^{m-\mu }\binom{m-\mu }{j}\frac{%
\mu !}{i!}\frac{\left( m-\mu \right) !}{j!}
n^{\underline{i+j}}   %% Change 25.06.2025
y^{i}\left( 1-y\right) ^{j}  \notag \\
&=&\displaystyle \sum_{i=0}^{\mu }{\binom{\mu }{i}}\sum_{j=0}^{m-\mu }{%
\binom{m-\mu }{j}}{\binom{n}{i+j}}\mu !(m-\mu )!p_{i+j,i}(y).  \notag
\end{eqnarray}%
Thus we get for the kernel 
\begin{eqnarray*}
&&K_{m,n}(x,y) \\
&=&\frac{(m+1)!(n+1)!}{(m+n+1)!}\sum_{\mu =0}^{m}p_{m,\mu
}(x)\sum_{i=0}^{\mu }{\binom{\mu }{i}}\sum_{j=0}^{m-\mu }{\binom{m-\mu }{j}}{%
\binom{n}{i+j}}p_{i+j,i}(y) \\
&=&\frac{(m+1)!(n+1)!}{(m+n+1)!}\sum_{i=0}^{m}\sum_{j=0}^{m-i}{\binom{n}{i+j}%
}p_{i+j,i}(y)\sum_{\mu =i}^{m-j}{\binom{\mu }{i}}{\binom{m-\mu }{j}}p_{m,\mu
}(x).
\end{eqnarray*}%
For the inner sum by using (\ref{eq.w1}) one has 
\begin{align*}
\sum_{\mu =i}^{m-j}{\binom{\mu }{i}}{\binom{m-\mu }{j}}p_{m,\mu }(x)& =%
\dfrac{m^{\underline{i+j}}}{i!j!} x^{i}(1-x)^{j} \sum_{\mu =i}^{m-j}p_{m-j-i,\mu -i}(x)
%% Change 25.06.2025
\\
& ={\binom{m}{i+j}}p_{i+j,i}(x).
\end{align*}%
Therefore, 
\begin{equation*}
K_{m,n}(x,y)=\frac{(m+1)!(n+1)!}{(m+n+1)!}\sum_{i=0}^{m}\sum_{j=0}^{m-i}{%
\binom{n}{i+j}}p_{i+j,i}(y){\binom{m}{i+j}}p_{i+j,i}(x).
\end{equation*}%
Collecting all terms with $i+j=k$ we get our proposition.
\end{proof}

\begin{remark}
Starting from the definition of the operators we have 
\begin{eqnarray*}
\lefteqn{K_{n_{r},\dots ,n_{1}}(t_{r+1},t_{1})} \\
&=&\int_{0}^{1}\cdots \int_{0}^{1}\sum_{k_{1}\geq 0}\cdots \sum_{k_{r}\geq
0}p_{n_{r},k_{r}}\left( t_{r+1}\right) p_{n_{r},k_{r}}\left( t_{r}\right)
\cdots p_{n_{1},k_{1}}\left( t_{2}\right) p_{n_{1},k_{1}}\left( t_{1}\right)
dt_{2}\cdots dt_{r}.
\end{eqnarray*}
Using elementary calculations, we obtain the following representation of the
kernel 
\begin{equation*}
K_{n_{r},\dots ,n_{1}}(x,y)=(n_{r}+1)\left( \prod_{i=1}^{r-1}\dfrac{n_{i}+1}{
n_{i}+n_{i+1}+1}\right) \sum_{k_{1}\geq 0}\cdots \sum_{k_{r}\geq 0}{\binom{
n_{r}}{k_{r}}}^{2}
\end{equation*}
\begin{equation*}
\times \left( \prod_{i=1}^{r-1}{\binom{n_{i}}{k_{i}}}^{2}{\binom{
n_{i}+n_{i+1}}{k_{i}+k_{i+1}}}^{-1}\right)
y^{k_{1}}(1-y)^{n_{1}-k_{1}}x^{k_{r}}(1-x)^{n_{r}-k_{r}}.
\end{equation*}
\end{remark}

Next we consider the composition of three operators and derive the following
nice representation for the kernel.

\begin{theorem}
For $r=3$ the kernel of the composition of classical Bernstein--Durrmeyer $%
M_{n_3}\circ M_{n_2}\circ M_{n_1}$ operators has the representation 
\begin{align*}
K_{n_3,n_2,n_1} \left( x,y\right) &=\frac{\left( n_3+1\right) !\left(
n_2+1\right) !\left( n_1+1\right) !\left( n_3+n_2+n_1+1\right) !}{\left(
n_3+n_2+1\right) !\left( n_3+n_1+1\right) !\left( n_2+n_1+1\right) !} \\
&\times \sum_{k= 0}^{\min \{n_1,n_2,n_3\}}\frac{\binom{n_3}{k}\binom{n_2}{k}%
\binom{n_1}{k}}{\binom{n_3+n_2+n_1+1}{k}}\sum_{\ell =0}^{k}p_{k,\ell }\left(
x\right) p_{k,\ell }\left( y\right).
\end{align*}
\end{theorem}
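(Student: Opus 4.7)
The plan is to apply Theorem~\ref{P1} twice, exploiting the recursive definition
\begin{equation*}
K_{n_3,n_2,n_1}(x,y) = \int_0^1 K_{n_3}(x,s)\,K_{n_2,n_1}(s,y)\,ds.
\end{equation*}
Theorem~\ref{P1}, combined with the identity $K_k(s,y)=(k+1)\sum_{\ell=0}^{k} p_{k,\ell}(s)p_{k,\ell}(y)$, lets me rewrite the two-operator kernel as a linear combination of single-operator kernels:
\begin{equation*}
K_{n_2,n_1}(s,y) = \frac{(n_2+1)!(n_1+1)!}{(n_2+n_1+1)!}\sum_{k=0}^{\min\{n_1,n_2\}} \frac{\binom{n_2}{k}\binom{n_1}{k}}{k+1}\,K_k(s,y).
\end{equation*}
Inserting this, interchanging sum and integral, and using $\int_0^1 K_{n_3}(x,s) K_k(s,y)\,ds = K_{n_3,k}(x,y)$ gives
\begin{equation*}
K_{n_3,n_2,n_1}(x,y) = \frac{(n_2+1)!(n_1+1)!}{(n_2+n_1+1)!}\sum_{k=0}^{\min\{n_1,n_2\}} \frac{\binom{n_2}{k}\binom{n_1}{k}}{k+1}\,K_{n_3,k}(x,y).
\end{equation*}

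I would then apply Theorem~\ref{P1} to each $K_{n_3,k}(x,y)$ and swap the order of summation, collecting terms that share the common inner index $k'\le\min\{n_1,n_2,n_3\}$. The outer factor $\binom{n_3}{k'}\sum_{\ell=0}^{k'} p_{k',\ell}(x)p_{k',\ell}(y)$ is already of the desired shape; what remains is the inner sum
\begin{equation*}
S_{k'} = \sum_{k=k'}^{\min\{n_1,n_2\}} \frac{\binom{n_2}{k}\binom{n_1}{k}\binom{k}{k'}\,k!}{(n_3+k+1)!}.
\end{equation*}

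The main obstacle is the closed-form evaluation of $S_{k'}$. The route I would take: apply $\binom{n_2}{k}\binom{k}{k'}=\binom{n_2}{k'}\binom{n_2-k'}{k-k'}$, substitute $r=k-k'$, and represent the factor $\frac{(k'+r)!\,n_3!}{(n_3+k'+r+1)!} = \int_0^1 t^{k'+r}(1-t)^{n_3}\,dt$ via the Beta function. Pulling the integral outside reduces the remaining series to a terminating ${}_2F_1(k'-n_1,\,k'-n_2;\,n_3+k'+2;\,1)$, which is summed by the Chu--Vandermonde identity. Carrying out the arithmetic yields
\begin{equation*}
S_{k'} = \frac{\binom{n_2}{k'}\binom{n_1}{k'}\,k'!\,(n_1+n_2+n_3+1-k')!}{(n_2+n_3+1)!(n_1+n_3+1)!},
\end{equation*}
and substituting back, together with the identity $k'!(n_1+n_2+n_3+1-k')!=(n_1+n_2+n_3+1)!/\binom{n_1+n_2+n_3+1}{k'}$, reproduces exactly the stated prefactor and coefficient. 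The hypergeometric evaluation is the one substantive step; the rest is careful bookkeeping of factorials.
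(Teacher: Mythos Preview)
Your argument is correct and lands on exactly the same inner sum the paper has to evaluate,
\[
\sum_{k\ge k'} \binom{n_2}{k}\binom{n_1}{k}\binom{k}{k'}\frac{k!}{(n_3+k+1)!},
\]
which both you and the paper close by Vandermonde. The overall strategy---use the recursion for the kernel, insert Theorem~\ref{P1}, swap summations, and sum the remaining series---is the same as the paper's.

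Where you differ is in the intermediate mechanics. The paper inserts the definition of $K_{n_3}$, integrates $p_{k,\ell}(t)p_{n_3,\mu}(t)$ explicitly, and then essentially re-derives the computation behind Theorem~\ref{P1} (the sums $S_1$ and $S_2$) before reaching the inner sum above. Your observation that $K_{n_2,n_1}=\sum_k c_k K_k$ and hence $\int K_{n_3}(x,s)K_k(s,y)\,ds=K_{n_3,k}(x,y)$ lets you invoke Theorem~\ref{P1} a second time instead of repeating its proof, which is a genuine economy. One small comment: the Beta-integral detour you describe is not needed and, as stated, does not by itself produce a series at argument $1$. After the substitution $r=k-k'$ and the identity $\binom{n_1}{k'+r}(k'+r)!=n_1!\,(-1)^r(k'-n_1)_r/(n_1-k')!$ together with $1/(n_3+k'+r+1)!=1/\big((n_3+k'+1)!\,(n_3+k'+2)_r\big)$, the series is already ${}_2F_1(k'-n_1,k'-n_2;n_3+k'+2;1)$ directly, and Chu--Vandermonde (equivalently, the paper's binomial Vandermonde) gives your claimed value of $S_{k'}$.
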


\begin{proof}
By Theorem \ref{P1}, we have 
\begin{equation*}
K_{n_{2},n_{1}}(t,y)=\dfrac{(n_{2}+1)!(n_{1}+1)!}{(n_{2}+n_{1}+1)!}%
\displaystyle \sum_{k\geq 0}{\binom{n_{2}}{k}}{\binom{n_{1}}{k}}\sum_{\ell
=0}^{k}p_{k,\ell }(t)p_{k,\ell }(y).
\end{equation*}%
Using the recurrence formula for the kernel and Theorem \ref{P1} we get 
\begin{align*}
K_{n_{3},n_{2},n_{1}}(x,y)& =\displaystyle%
\int_{0}^{1}K_{n_{2},n_{1}}(t,y)K_{n_{3}}(x,t)dt \\
& =\displaystyle \int_{0}^{1}\dfrac{(n_{2}+1)!(n_{1}+1)!}{(n_{2}+n_{1}+1)!}%
\sum_{k\geq 0}{\binom{n_{2}}{k}}{\binom{n_{1}}{k}}\sum_{\ell
=0}^{k}p_{k,\ell }(t)p_{k,\ell }(y) \\
& \times (n_{3}+1)\sum_{\mu =0}^{n_{3}}p_{n_{3},\mu }(x)p_{n_{3},\mu }(t)dt
\\
& =\dfrac{(n_{2}+1)!(n_{1}+1)!}{(n_{2}+n_{1}+1)!}\sum_{k\geq 0}{\binom{n_{2}%
}{k}}{\binom{n_{1}}{k}}\sum_{\ell =0}^{k}p_{k,\ell }(y) \\
& \times (n_{3}+1)\sum_{\mu =0}^{\mu }p_{n_{3},\mu }(x)\int_{0}^{1}p_{k,\ell
}(t)p_{n_{3},\mu }(t)dt.
\end{align*}%
Using (\ref{eq.w2}) and (\ref{eq.w3}) we obtain 
\begin{eqnarray*}
\lefteqn{K_{n_{3},n_{2},n_{1}}(x,y)} \\
&=&\dfrac{(n_{3}+1)!(n_{2}+1)!(n_{1}+1)!}{(n_{2}+n_{1}+1)!}\sum_{k\geq 0}{%
\binom{n_{2}}{k}}{\binom{n_{1}}{k}}\sum_{\ell =0}^{k}p_{k,\ell }(y){\binom{k%
}{\ell }}\dfrac{1}{(k+n_{3}+1)!}\cdot S_{1},
\end{eqnarray*}%
where 
\begin{equation*}
S_{1}:=\displaystyle \sum_{\mu =0}^{n_{3}}p_{n_{3},\mu }(x)(k+n_{3}-\ell
-\mu )^{\underline{k-l}}(\ell +\mu )^{\underline{\ell }}.
\end{equation*}

In the same way as in (\ref{eq.ww1}) we derive 
\begin{equation*}
S_{1}=\displaystyle \sum_{i=0}^{\ell }{\binom{\ell }{i}}\sum_{j=0}^{k-\ell }{%
\binom{k-\ell }{j}}{\binom{n_{3}}{i+j}}\ell !(k-\ell )!p_{i+j,i}\left(
x\right) .
\end{equation*}%
Thus, by interchanging the order of summation we get 
\begin{align*}
K_{n_{3},n_{2},n_{1}}(x,y)& =\dfrac{(n_{3}+1)!(n_{2}+1)!(n_{1}+1)!}{%
(n_{2}+n_{1}+1)!} \\
& \times \sum_{k\geq 0}{\binom{n_{2}}{k}}{\binom{n_{1}}{k}}\dfrac{k!}{%
(n_{3}+k+1)!}\sum_{i=0}^{k}\dfrac{1}{i!}\sum_{j=0}^{k-i}\dfrac{1}{j!}{\binom{%
n_{3}}{i+j}}p_{i+j,i}(x)S_{2},
\end{align*}%
where 
\begin{align*}
S_{2}& :=\displaystyle \sum_{\ell =i}^{k-j}p_{k,\ell }(y)\ell ^{\underline{i}%
}(k-\ell )^{\underline{j}}=\dfrac{k!}{(k-j-i)!}y^{i}(1-y)^{j} \\
& ={\binom{k}{i+j}}i!j!p_{i+j,i}(y).
\end{align*}%
Now 
\begin{align*}
K_{n_{3},n_{2},n_{1}}(x,y)& =\dfrac{(n_{3}+1)!(n_{2}+1)!(n_{1}+1)!}{%
(n_{2}+n_{1}+1)!}\sum_{k\geq 0}{\binom{n_{2}}{k}}{\binom{n_{1}}{k}}\dfrac{k!%
}{(n_{3}+k+1)!} \\
& \times \sum_{i=0}^{k}\sum_{j=0}^{k-i}{\binom{n_{3}}{i+j}}p_{i+j,i}(x){%
\binom{k}{i+j}}p_{i+j,i}(y).
\end{align*}%
Collecting all terms with $i+j=\ell $ and interchanging the order of
summation leads to 
\begin{align*}
K_{n_{3},n_{2},n_{1}}(x,y)& =\dfrac{(n_{3}+1)!(n_{2}+1)!(n_{1}+1)!}{%
(n_{2}+n_{1}+1)!}\sum_{k\geq 0}{\binom{n_{2}}{k}}{\binom{n_{1}}{k}}\dfrac{k!%
}{(n_{3}+k+1)!} \\
& \times \sum_{\ell =0}^{k}{\binom{n_{3}}{\ell }}{\binom{k}{\ell }}\sum_{\nu
=0}^{\ell }p_{\ell ,\nu }(x)p_{\ell ,\nu }(y) \\
& =\dfrac{(n_{3}+1)!(n_{2}+1)!(n_{1}+1)!}{(n_{2}+n_{1}+1)!}\sum_{\ell \geq 0}%
{\binom{n_{3}}{\ell }}\sum_{\nu =0}^{\ell }p_{\ell ,\nu }(x)p_{\ell ,\nu }(y)
\\
& \times \sum_{k\geq \ell }{\binom{n_{2}}{k}}{\binom{n_{1}}{k}}\dfrac{k!}{%
(n_{3}+k+1)!}{\binom{k}{\ell }}.
\end{align*}%
For the inner sum we get

\begin{eqnarray*}
&&\sum_{k\geq \ell }{\binom{n_{2}}{k}}{\binom{n_{1}}{k}}\dfrac{k!}{%
(k+n_{3}+1)!}{\binom{k}{\ell }} \\
&=&\dfrac{n_{2}!n_{1}!}{\ell !}\dfrac{1}{(n_{2}+n_{3}+1)!(n_{1}-\ell )!}%
\sum_{k}{\binom{n_{1}-\ell }{k}}{\binom{n_{2}+n_{3}+1}{n_{2}-k-\ell }} \\
&=&\dfrac{n_{2}!n_{1}!}{\ell !(n_{1}-\ell )!(n_{2}+n_{3}+1)!}{\binom{%
n_{1}+n_{2}+n_{3}-\ell +1}{n_{2}-\ell }}.
\end{eqnarray*}%
Therefore, 
\begin{align*}
K_{n_{3},n_{2},n_{1}}(x,y)& =\dfrac{(n_{3}+1)!(n_{2}+1)!(n_{1}+1)!}{%
(n_{3}+n_{2}+1)!(n_{3}+n_{1}+1)!(n_{2}+n_{1}+1)!} \\
& \times \sum_{\ell \geq 0}(n_{1}+n_{2}+n_{3}-\ell +1)!\ell !{\binom{n_{3}}{%
\ell }}{\binom{n_{2}}{\ell }}{\binom{n_{1}}{\ell }}\sum_{\nu =0}^{\ell
}p_{\ell ,\nu }(x)p_{\ell ,\nu }(y)
\end{align*}%
and the proof is complete.
\end{proof}

\section{Composition of several Bernstein--Durrmeyer operators}

\label{sec3}

The purpose of this section is to show that a representation of the form 
\begin{equation*}
K_{n_{r},\ldots ,n_{1}}\left( x,y\right) =\displaystyle \sum_{k=0}^{n}a_{k}%
\left( n_{r},\ldots ,n_{1}\right) \sum_{\ell =0}^{k}p_{k,\ell }(x)p_{k,\ell
}(y),\, \quad x,y\in \lbrack 0,1]
\end{equation*}%
in fact exists not only for $r\in \left \{ 2,3\right \} $, but also for
arbitrary $r\in \mathbb{N}$.

In this section we use the eigenstructure of $M_n$ in order to study the
kernel function of the composition of $r$ Bernstein--Durrmeyer operators. As
a corollary we get a representation of the iterate $M_n^r$ as a linear
combination of $M_k$, $k=0,1,\dots,n$.

Let $r\in {\mathbb{N}}$, $n_1,\cdots, n_r\in {\mathbb{N}}$ and $n:=\min
\{n_1,\dots,n_r\}$.

\begin{theorem}
\label{THM4} The function 
\begin{equation*}
K_{n_r,\cdots,n_1}(x,y)=\displaystyle \sum_{k=0}^nc_k\cdot(k+1)\sum_{%
\ell=0}^kp_{k,\ell}(x)p_{k,\ell}(y),\, \, x,y\in[0,1],
\end{equation*}
where $(c_0,\dots,c_n)$ is the unique solution of the linear system 
\begin{equation}  \label{eq.l1}
\displaystyle \sum_{k=0}^n\dfrac{k^{\underline{j}}}{(k+j+1)^{\underline{j}}}%
c_k=\prod_{i=1}^r\dfrac{n_i^{\underline{j}}}{(n_i+j+1)^{\underline{j}}},\,
\, j=0,1,\dots,n,
\end{equation}
is the kernel of the operator $M_{n_r}\circ \dots \circ M_{n_1}$, i.e., 
\begin{equation*}
(M_{n_r}\circ \dots \circ
M_{n_1})f(x)=\int_0^1f(y)K_{n_r,\dots,n_1}(x,y)dy,\, \, f\in L_{1}[0,1],\,x%
\in[0,1].
\end{equation*}
\end{theorem}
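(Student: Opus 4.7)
My plan is to work in the orthonormal Legendre basis $\{Q_k\}$ used by Derriennic and exploit the fact that every $M_n$ is diagonalized by it: $M_n Q_k = \lambda_{n,k} Q_k$ for $0\le k\le n$, and $M_n Q_k=0$ for $k>n$. Since all the $M_{n_i}$ are diagonal in the same basis, the composition is determined by its spectrum, and the identification of the kernel reduces to a finite linear system.

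First I would expand both sides in the $Q$-basis. Iterating $M_n f = \sum_{k=0}^n \lambda_{n,k}\langle Q_k,f\rangle Q_k$ gives the spectral form
$$K_{n_r,\ldots,n_1}(x,y) = \sum_{k=0}^{n}\left(\prod_{i=1}^{r}\lambda_{n_i,k}\right) Q_k(x) Q_k(y).$$
On the other hand, the single-operator kernel $K_k(x,y) = (k+1)\sum_{\ell=0}^k p_{k,\ell}(x) p_{k,\ell}(y)$ admits the same spectral form $K_k(x,y) = \sum_{j=0}^k \lambda_{k,j} Q_j(x) Q_j(y)$. Consequently the announced identity is equivalent to $K_{n_r,\ldots,n_1} = \sum_{k=0}^n c_k K_k$, which expands to
$$\sum_{k=0}^n c_k K_k(x,y) = \sum_{j=0}^{n}\left(\sum_{k=j}^{n} c_k \lambda_{k,j}\right) Q_j(x) Q_j(y).$$

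Second, I would compare coefficients. The tensors $\{Q_j(x) Q_j(y)\}_{j=0}^{n}$ are linearly independent as functions on $[0,1]^2$ (fix $y$ outside the finitely many zeros of the $Q_j$ and invoke linear independence of $\{Q_j\}$ on $[0,1]$). Equating coefficients of $Q_j(x)Q_j(y)$ in the two expansions yields
$$\sum_{k=j}^{n} c_k \lambda_{k,j} = \prod_{i=1}^{r}\lambda_{n_i,j}, \qquad j=0,1,\ldots,n.$$
Substituting $\lambda_{n,k} = n^{\underline{k}}/(n+k+1)^{\underline{k}}$ and using $k^{\underline{j}}=0$ for $k<j$ to extend the summation down to $k=0$ produces precisely \eqref{eq.l1}.

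Third, I would verify unique solvability of \eqref{eq.l1}. As a system in $(c_0,\ldots,c_n)$, its coefficient matrix $A_{j,k}=k^{\underline{j}}/(k+j+1)^{\underline{j}}$ vanishes for $k<j$, hence is upper triangular in the $(j,k)$-indexing, with strictly positive diagonal entries $A_{j,j}=j!/(2j+1)^{\underline{j}}$, so the system is uniquely solvable. I expect no serious obstacle in this proof: the entire argument is diagonalization followed by coefficient matching, and the only point requiring a brief justification is the linear independence of the $Q_j(x)Q_j(y)$, which is immediate.
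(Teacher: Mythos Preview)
Your proposal is correct and follows essentially the same approach as the paper: both exploit the diagonalization of the $M_n$ in the Legendre basis $\{Q_j\}$ to reduce the identity $K_{n_r,\ldots,n_1}=\sum_{k=0}^n c_k K_k$ to the linear system \eqref{eq.l1}, and both note that the coefficient matrix is upper triangular with nonzero diagonal. The only cosmetic difference is that the paper phrases the verification at the operator level (define $W=\sum_k c_k M_k$ and check $WQ_j=(M_{n_r}\circ\cdots\circ M_{n_1})Q_j$), whereas you work directly with the spectral expansions of the kernels and compare coefficients of $Q_j(x)Q_j(y)$.
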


\begin{remark}
The coefficients $c_{k}$, $k=0,\dots ,n$, depend on $n_{1},\dots ,n_{r}$.
\end{remark}

Note that the linear system (\ref{eq.l1}) has a unique solution since the
corresponding matrix 
\begin{equation}
A_{n}=\left( \frac{k^{\underline{j}}}{\left( k+j+1\right) ^{\underline{j}}}%
\right) _{j,k=0,\ldots ,n}  \label{eq.*}
\end{equation}%
is invertible. This can easily be seen. The square matrix $A_{n}$ is upper
triangular since $k^{\underline{j}}=0$, if $j>k\geq 0$. Hence, its
determinant satisfies 
\begin{equation*}
\det A_{n}=\prod_{k=0}^{n}\frac{k!}{\left( 2k+1\right) ^{\underline{k}}}%
=\prod_{k=0}^{n}\binom{2k+1}{k}^{-1}\neq 0.
\end{equation*}

\begin{proof}[Proof of Theorem~\protect\ref{THM4}]
Let $W$ be the integral operator on $L_{1}[0,1]$ constructed with the kernel 
$K_{n_{r},\dots ,n_{1}}$. We have 
\begin{align*}
Wf(x)& =\int_{0}^{1}K_{n_{r},\dots ,n_{1}}(x,y)f(y)dy=\displaystyle%
\int_{0}^{1}f(y)\sum_{k=0}^{n}c_{k}(k+1)\sum_{\ell =0}^{k}p_{k,\ell
}(x)p_{k,\ell }(y)dy \\
& =\displaystyle \sum_{k=0}^{n}c_{k}(k+1)\sum_{\ell =0}^{k}p_{k,\ell
}(x)\int_{0}^{1}f(y)p_{k,\ell }(y)dy \\
& =\sum_{k=0}^{n}c_{k}M_{k}f(x).
\end{align*}%
It is sufficient to prove that 
\begin{equation*}
WQ_{j}=(M_{n_{r}}\circ \dots \circ M_{n_{1}})Q_{j},\, \,j=0,1,\dots ,n.
\end{equation*}%
This is equivalent to 
\begin{equation*}
\left( \sum_{k=0}^{n}c_{k}M_{k}\right) Q_{j}=(M_{n_{r}}\circ \dots \circ
M_{n_{1}})Q_{j},\, \,j=0,1,\dots ,n,
\end{equation*}%
and moreover, to the system (\ref{eq.l1}). This concludes the proof.
\end{proof}

\begin{remark}
For $r\geq 4$, it remains an open problem whether the coefficients $c_{k}$
in Theorem \ref{THM4} can be represented in a concise form as in the cases $%
r\leq 3$.
\end{remark}

\begin{theorem}
The inverse of the matrix $A_{n}$ defined in (\ref{eq.*}) is given by 
\begin{equation*}
A_{n}^{-1}=\left( \left( -1\right) ^{k-\ell }\frac{2\ell +1}{k+1}\binom{%
k+\ell }{\ell }\binom{\ell }{k}\right) _{k,\ell =0,\ldots ,n}.
\end{equation*}
\end{theorem}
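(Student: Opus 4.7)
The plan is to verify the identity $A_n A_n^{-1} = I$ by direct multiplication. Write $B$ for the claimed inverse, with entries $B_{k,\ell} = (-1)^{k-\ell}\frac{2\ell+1}{k+1}\binom{k+\ell}{\ell}\binom{\ell}{k}$. Both $A_n$ and $B$ are upper triangular: the entries of $A_n$ vanish when $j>k$ since $k^{\underline{j}}=0$, and those of $B$ vanish when $k>\ell$ since $\binom{\ell}{k}=0$. Consequently $(A_nB)_{j,\ell}$ is automatically zero for $j>\ell$, and only the case $j\leq\ell$ requires work.

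For $j \leq \ell$ I would convert every falling factorial and binomial into ordinary factorials. The factor $(k+j+1)^{\underline{j}}$ in the denominator yields $(k+j+1)!/(k+1)!$; the $(k+1)!$ combines with the $1/(k+1)$ coming from $B_{k,\ell}$ to give $k!$, which then cancels against one of the $k!$ factors produced by $\binom{k+\ell}{\ell}\binom{\ell}{k}$. After this tidying, substituting $m = k-j$ and extracting $1/(\ell-j)!$ brings the product to
\begin{equation*}
(A_n B)_{j,\ell} = \frac{(-1)^{\ell+j}(2\ell+1)}{(\ell-j)!}\sum_{m=0}^{\ell-j}(-1)^{m}\binom{\ell-j}{m}\frac{(m+j+\ell)!}{(m+2j+1)!}.
\end{equation*}

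The decisive observation is that $(m+j+\ell)!/(m+2j+1)! = \prod_{i=2}^{\ell-j}(m+2j+i)$ is a polynomial in $m$ of degree $\ell-j-1$. For $j<\ell$ this degree is strictly less than $N:=\ell-j$, so the classical identity $\sum_{m=0}^{N}(-1)^m\binom{N}{m}P(m)=0$ (valid for every polynomial $P$ of degree $<N$, since this sum is, up to sign, the $N$-th forward difference of $P$ at $0$) forces the inner sum to vanish. When $j=\ell$ only the $m=0$ term survives and a direct check yields $(A_nB)_{\ell,\ell}=1$, which finishes the verification.

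I expect the only real obstacle to be the algebraic bookkeeping needed to expose the polynomial-of-low-degree structure; once the sum is written in the form above, the vanishing is immediate from the finite-difference lemma and no further hypergeometric or orthogonality identities are needed.
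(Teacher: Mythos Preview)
Your proposal is correct and follows essentially the same route as the paper: both verify $A_nB=I$ by direct multiplication, reduce the off-diagonal entry to an alternating binomial sum of a polynomial whose degree is one less than the order of the difference, and invoke the finite-difference lemma. The only cosmetic difference is that the paper keeps the summation variable $k$ and uses $\binom{\ell}{k}$ (so the polynomial $k^{\underline{j}}(k+j+2)^{\overline{\ell-j-1}}$ has degree $\ell-1$), whereas you shift by $m=k-j$ and work with $\binom{\ell-j}{m}$; the underlying argument is identical.
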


\begin{proof}
The equation $\binom{\ell }{k}=0$, for $k>\ell \geq 0$, reflects the fact
that matrix $A_{n}^{-1}$ is upper triangular.

We calculate $A_{n}A_{n}^{-1}=:\left( e_{j,\ell }\right) _{j,\ell =0,\ldots
,n}$. Noting that $k^{\underline{j}}=0$, if $0\leq k<j$, we have 
\begin{eqnarray*}
e_{j,\ell } &=&\sum_{k=j}^{\ell }\frac{k^{\underline{j}}}{\left(
k+j+1\right) ^{\underline{j}}}\cdot \left( -1\right) ^{k-\ell }\frac{2\ell +1%
}{k+1}\binom{k+\ell }{\ell }\binom{\ell }{k} \\
&=&\frac{2\ell +1}{\ell !}\sum_{k=j}^{\ell }\left( -1\right) ^{k-\ell }%
\binom{\ell }{k}\frac{k^{\underline{j}}\left( k+1\right) ^{\overline{\ell }}%
}{\left( k+1\right) ^{\overline{j+1}}} \\
&=&\frac{2\ell +1}{\ell !}\sum_{k=j}^{\ell }\left( -1\right) ^{k-\ell }%
\binom{\ell }{k}\frac{k^{\underline{j}}\left( k+j+1\right) ^{\overline{\ell
-j}}}{k+j+1}.
\end{eqnarray*}%
In the case $j=\ell $ we obtain 
\begin{equation*}
e_{\ell ,\ell }=\frac{2\ell +1}{\ell !}\frac{\ell !}{2\ell +1}=1.
\end{equation*}%
Next, we consider the case $j<\ell $: 
\begin{equation*}
e_{j,\ell }=\frac{2\ell +1}{\ell !}\sum_{k=0}^{\ell }\left( -1\right)
^{k-\ell }\binom{\ell }{k}k^{\underline{j}}\left( k+j+2\right) ^{\overline{%
\ell -j-1}}=0,
\end{equation*}%
since $k^{\underline{j}}\left( k+j+2\right) ^{\overline{\ell -j-1}}$ is a
polynomial of degree $\ell -1$ in the variable $k$. In the case $j>\ell $ we
obviously have $e_{j,\ell }=0$.
\end{proof}

Inspecting the proof of Theorem \ref{THM4} we obtain the following result
concerning the iterates of the Bernstein--Durrmeyer operators.

\begin{corollary}
Let $r,n\in {\mathbb{N}}$. Then, 
\begin{equation*}
M_n^r=\sum_{k=0}^n c_kM_k,
\end{equation*}
where $(c_0,\dots,c_n)$ is the unique solution of the system 
\begin{equation*}
\displaystyle \sum_{k=0}^n\dfrac{k^{\underline{j}}}{(k+j+1)^{\underline{j}}}%
c_k=\left(\dfrac{n^{\underline{j}}}{(n+j+1)^{\underline{j}}}\right)^r,\, \,
j=0,1,\dots,n.
\end{equation*}
\end{corollary}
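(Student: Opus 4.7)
The plan is to obtain this corollary as the immediate specialization of Theorem \ref{THM4} to the case $n_1 = n_2 = \cdots = n_r = n$. Under this choice one has $\min\{n_1,\dots,n_r\} = n$, and the composition $M_{n_r} \circ \cdots \circ M_{n_1}$ collapses to the iterate $M_n^r$. In the linear system (\ref{eq.l1}) of Theorem \ref{THM4}, the right-hand side
$$\prod_{i=1}^r \frac{n_i^{\underline{j}}}{(n_i+j+1)^{\underline{j}}}$$
becomes the $r$-th power $\left(\frac{n^{\underline{j}}}{(n+j+1)^{\underline{j}}}\right)^r$ for $j = 0,1,\dots,n$, which is exactly the system in the corollary. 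Since the matrix $A_n$ in (\ref{eq.*}) has already been shown to be invertible, this system has a unique solution $(c_0,\dots,c_n)$.

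For the operator identity itself, the key observation is already contained in the first part of the proof of Theorem \ref{THM4}: the integral operator $W$ associated with the kernel $K_{n_r,\dots,n_1}$, when written using the representation
$$K_{n_r,\dots,n_1}(x,y) = \sum_{k=0}^n c_k \cdot (k+1) \sum_{\ell=0}^k p_{k,\ell}(x) p_{k,\ell}(y),$$
satisfies $Wf = \sum_{k=0}^n c_k M_k f$ by interchanging the finite sum with the integral. Theorem \ref{THM4} then identifies $W$ with $M_{n_r} \circ \cdots \circ M_{n_1}$. Specializing to all $n_i = n$ yields $M_n^r = \sum_{k=0}^n c_k M_k$ with the $c_k$'s determined by the stated system.

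I do not expect any real obstacle here; the corollary is essentially a restatement of Theorem \ref{THM4} under equal parameters. The only thing worth flagging in the write-up is that the reduction works because the identity $Wf = \sum_{k=0}^n c_k M_k f$ established in the proof of Theorem \ref{THM4} holds regardless of whether the $n_i$ coincide, so no re-derivation of the kernel representation is needed.
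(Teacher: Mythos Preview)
Your proposal is correct and matches the paper's approach exactly: the paper simply states that the corollary follows by ``inspecting the proof of Theorem~\ref{THM4}'', and you have spelled out precisely that inspection --- namely, that the computation $Wf=\sum_{k=0}^n c_k M_k f$ in the proof of Theorem~\ref{THM4}, combined with the identification $W=M_{n_r}\circ\cdots\circ M_{n_1}$, gives the operator identity, and specializing $n_1=\cdots=n_r=n$ turns the right-hand side of system~(\ref{eq.l1}) into the $r$-th power appearing in the corollary.
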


\section{The case of Sz\'{a}sz--Mirakjan--Durrmeyer operators}

\label{sec4}

For $\alpha \geq 0$ we denote by $W_{\alpha }$ the space of all locally
integrable functions on $[0,\infty )$, satisfying for $t\geq 0$ an
exponential growth condition, i.e., $|f(t)|\leq Ce^{\alpha t}$ for some
positive constant $C$. For $f\in W_{\alpha }$ the Sz\'{a}%
sz--Mirakjan--Durrmeyer operators are defined by 
\begin{align*}
& S_{n}f(x)=\int_{0}^{\infty }f(y)K_{n}(x,y)dy, \\
& K_{n}(x,y)=n\sum_{k=0}^{\infty }s_{n,k}(x)s_{n,k}(y),\, \,s_{n,k}(x)=%
\dfrac{(nx)^{k}}{k!}e^{-nx}.
\end{align*}

The commutativity of the Sz\'{a}sz--Mirakjan--Durrmeyer operators was
established by Heilmann \cite{Hei}. The following general result,
encompassing the commutativity and extending beyond it, was later proved by
Abel and Ivan in \cite{Abel-Ivan-Varna-CTF2005-proc-2006} using modified
Bessel functions. The same result was rediscovered in \cite[Theorem 1]{S}.

%The commutativity of the Szász–Mirakjan–Durrmeyer operators was first established by Heilmann \cite{Hei}. A more general result, encompassing this commutativity and extending beyond it, was later proved by Abel and Ivan in \cite{Abel-Ivan-Varna-CTF2005-proc-2006} using Bessel functions. This result was subsequently rediscovered in \cite[Theorem 1]{S}.

%The following result was originally proved by Abel and Ivan in \cite%
%{Abel-Ivan-Varna-CTF2005-proc-2006} using Bessel functions, with the
%commutativity of the operators arising as a corollary. The same result was
%later rediscovered in \cite[Theorem %1]{S}. 

Here we present an elementary proof.

\begin{theorem}
\label{Thm8} For $m,n> 0$, the kernel of the composition of Sz\'{a}%
sz--Mirakjan--Durrmeyer operators $S_{m}\circ S_{n}$ operators has the
representation 
\begin{equation*}
K_{m,n}\left( x,y\right) =K_{\frac{mn}{m+n}}\left( x,y\right).
\end{equation*}
\end{theorem}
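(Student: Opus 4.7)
The plan is to evaluate the composition kernel directly from the series definition of $s_{n,k}$, reduce it to a double power series in $x$ and $y$, and then collapse that series by Vandermonde's identity.

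First I would write the composition kernel as
\[
K_{m,n}(x,y)=mn\sum_{j,k\ge 0}s_{m,j}(x)s_{n,k}(y)\int_0^\infty s_{m,j}(t)s_{n,k}(t)\,dt,
\]
where the interchange of sum and integral is legitimate by positivity of all terms. The inner integral reduces to a Gamma integral, yielding $m^jn^k(j+k)!\big/\bigl(j!\,k!(m+n)^{j+k+1}\bigr)$.

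Next I would introduce $\mu:=mn/(m+n)$ and split the exponential via the arithmetic identity $mx+ny=\mu(x+y)+(m^2x+n^2y)/(m+n)$. Setting $A:=m^2x/(m+n)$ and $B:=n^2y/(m+n)$, one has $AB=\mu^2xy$, and the kernel takes the form
\[
K_{m,n}(x,y)=\mu\,e^{-\mu(x+y)}\,e^{-A-B}\sum_{j,k\ge 0}\binom{j+k}{j}\frac{A^jB^k}{j!\,k!}.
\]

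The heart of the argument, and the step I expect to be the main obstacle, is to evaluate this double series in closed form. I would apply the Vandermonde identity $\binom{j+k}{j}=\sum_\ell\binom{j}{\ell}\binom{k}{\ell}$, interchange the order of summation, and reindex $j\mapsto\ell+a$, $k\mapsto\ell+b$; the $(a,b)$-sums factor as $e^A\cdot e^B$, and what remains is the diagonal series $\sum_{\ell\ge 0}(AB)^\ell/(\ell!)^2$. After the cancellation of $e^{A+B}$ against $e^{-A-B}$ and using $AB=\mu^2xy$, the result becomes $\mu\sum_{\ell\ge 0}s_{\mu,\ell}(x)s_{\mu,\ell}(y)$, which is exactly $K_\mu(x,y)=K_{mn/(m+n)}(x,y)$. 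In other words, what appears to require modified Bessel functions in the Abel--Ivan approach is in essence just Chu--Vandermonde in disguise.
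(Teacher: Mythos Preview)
Your argument is correct. The integral evaluation, the arithmetic identity $mx+ny=\mu(x+y)+A+B$, the Vandermonde expansion $\binom{j+k}{j}=\sum_\ell \binom{j}{\ell}\binom{k}{\ell}$, and the ensuing factorisation $\sum_{j\ge\ell}\binom{j}{\ell}A^j/j!=(A^\ell/\ell!)e^A$ all check out, and the final collapse to $\mu\sum_\ell s_{\mu,\ell}(x)s_{\mu,\ell}(y)$ is exactly $K_\mu$.

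Your route differs from the paper's in the central manipulation. After the same Gamma-integral step, the paper treats the two summation indices asymmetrically: it writes the falling factorial $(k+\ell)^{\underline{k}}$ as $\left.(d/ds)^k s^{k+\ell}\right|_{s=1}$, sums the inner index to an exponential, applies Leibniz' rule to the product $s^k e^{cs}$, and then unwinds the resulting double sum by a further index swap. You instead keep the two indices on an equal footing, recognise the binomial $\binom{j+k}{j}$ directly, and split it by Chu--Vandermonde; the two single sums then factor as $e^A$ and $e^B$ in one stroke. Both arguments are elementary and avoid Bessel functions, but yours is more symmetric and a little shorter, while the paper's differentiation trick is the same device it already used in the Bernstein--Durrmeyer case and so fits the overall narrative of the article.
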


The corresponding well-known formula for the basis functions are 
\begin{align}
& \sum_{k=0}^{\infty }s_{n,k}(x)=1, \nonumber  \\  %%%\label{eq.sm1} \\
& \int_{0}^{\infty }s_{n,k}\left( t\right) dt=\frac{1}{n}, \text{ \quad }
k=0,1,2,\ldots ,  \label{eq.sm2} \\
& s_{m,\mu }\left( t\right) s_{n,\nu }\left( t\right) =\frac{m^{\mu }n^{\nu
} }{(m+n)^{\mu +\nu }}\binom{\mu +\nu }{\nu }s_{m+n,\mu +\nu }\left(
t\right) .  \label{eq.sm3}
\end{align}

\begin{proof}[Proof of Theorem~\protect\ref{Thm8}]
Using (\ref{eq.sm2}) and (\ref{eq.sm3}) we get 
\begin{equation*}
K_{m,n}(x,y)=\displaystyle \frac{mn}{m+n}\sum_{k\geq 0}s_{m,k}(x)\left( 
\dfrac{m}{m+n}\right) ^{k}\dfrac{1}{k!}\sum_{\ell \geq 0}s_{n,\ell
}(y)\left( \dfrac{n}{m+n}\right) ^{\ell }(k+\ell )^{\underline{k}}.
\end{equation*}%
Writing 
\begin{equation*}
(k+\ell )^{\underline{k}}=\left. \left( \dfrac{d}{ds}\right) ^{k}s^{k+\ell
}\right \vert _{s=1},
\end{equation*}%
we obtain for the inner sum 
\begin{eqnarray*}
&&\displaystyle \sum_{\ell \geq 0}s_{n,\ell }(y)\left. \left( \dfrac{n}{m+n}%
\right) ^{\ell }\left( \dfrac{d}{ds}\right) ^{k}s^{k+\ell }\right \vert
_{s=1} \\
&=&e^{-ny}\left( \dfrac{d}{ds}\right) ^{k}\left. \left \{ s^{k}\sum_{\ell
\geq 0}\dfrac{1}{\ell !}{\left( \dfrac{n^{2}ys}{m+n}\right) ^{\ell }}\right
\} \right \vert _{s=1} \\
&=&e^{-ny}\left( \dfrac{d}{ds}\right) ^{k}\left. \left \{ s^{k}\exp \left( 
\dfrac{n^{2}ys}{m+n}\right) \right \} \right \vert _{s=1} \\
&=&e^{-ny}\sum_{j=0}^{k}{\binom{k}{j}}\dfrac{k!}{j!}\left( \dfrac{n^{2}y}{m+n%
}\right) ^{j}\exp \left( \dfrac{n^{2}y}{m+n}\right) ,
\end{eqnarray*}%
where the last equation follows by Leibniz' rule. Thus we get 
\begin{eqnarray*}
K_{m,n}(x,y) &=&\dfrac{mn}{m+n}\exp \left( -ny-mx+\dfrac{n^{2}y}{m+n}\right)
\\
&&\sum_{k\geq 0}\left( \dfrac{m^{2}x}{m+n}\right) ^{k}\dfrac{1}{k!}%
\sum_{j=0}^{k}{\binom{k}{j}}\dfrac{1}{j!}\left( \dfrac{n^{2}y}{m+n}\right)
^{j} \\
&=&\dfrac{mn}{m+n}\exp \left( -ny-mx+\dfrac{n^{2}y}{m+n}\right) \sum_{j\geq
0}\dfrac{1}{j!j!}\left( \dfrac{n^{2}y}{m+n}\right) ^{j} \\
&&\sum_{k\geq j}\left( \dfrac{m^{2}x}{m+n}\right) ^{k}\dfrac{1}{(k-j)!} \\
&=&\dfrac{mn}{m+n}\exp \left( -ny-mx+\dfrac{n^{2}y}{m+n}+\dfrac{m^{2}x}{m+n}%
\right) \\
&&\sum_{j\geq 0}\dfrac{1}{j!j!}\left( \dfrac{n^{2}y}{m+n}\right) ^{j}\left( 
\dfrac{m^{2}x}{m+n}\right) ^{j} \\
&=&\dfrac{mn}{m+n}\exp \left( -\dfrac{mn}{m+n}x-\dfrac{mn}{m+n}y\right)
\sum_{j\geq 0}\dfrac{1}{j!}\left( \dfrac{mn}{m+n}x\right) ^{j}\dfrac{1}{j!}%
\left( \dfrac{mn}{m+n}y\right) ^{j} \\
&=&\dfrac{mn}{m+n}\sum_{j\geq 0}s_{\frac{mn}{m+n},j}(x)s_{\frac{mn}{m+n}%
,j}(y),
\end{eqnarray*}
which completes the proof.
\end{proof}

\begin{remark}
This can be iterated 
\begin{equation*}
K_{n_{1},\ldots ,n_{r}}\left( x,y\right) =K_{1/\left( 1/n_{1}+\cdots
+1/n_{r}\right) }\left( x,y\right)
\end{equation*}%
and be rewritten in the form 
\begin{equation*}
K_{n_{1},\ldots ,n_{r}}\left( x,y\right) =K_{H\left( n_{1},\ldots
,n_{r}\right) /r}\left( x,y\right) ,
\end{equation*}%
where $H\left( n_{1},\ldots ,n_{r}\right) $ denotes the harmonic mean of the
positive numbers $n_{1},\ldots ,n_{r}$.
\end{remark}

\begin{remark}
In particular, the iterate $S_{n}^{r}$ has the kernel 
\begin{equation*}
K_{n,\ldots ,n}\left( x,y\right) =K_{n/r}\left( x,y\right) .
\end{equation*}
\end{remark}
Of course, $K_{m,n}\left( x,y\right) $ can be represented in terms of Bessel
functions (see \cite{Abel-Ivan-Varna-CTF2005-proc-2006}) 
\begin{equation*}
K_n(x,y)=ne^{-n(x+y)}\displaystyle \sum_{k=0}^{\infty}\dfrac{(nx)^k}{k!}%
\dfrac{(ny)^k}{k!}=ne^{-n(x+y)}I_0(2n\sqrt{xy}),
\end{equation*}
where $I_0$ denotes the modified Bessel function given by 
\begin{equation*}
I_0(z)=\displaystyle \sum_{j=0}^{\infty}(j!)^{-2}\left(\dfrac{z^2}{4}%
\right)^j.
\end{equation*}

Our derivation does not make use of properties of Bessel functions, in
particular, integral representations.

%%%%%%%%%%%%%%%%%%%%%%%%%%%%%%%%%%%%%%%%%%%%%%%%%%%%%%%%%%%%%%%%%%%

\strut

\noindent
{\bf Funding.} The work of A.M. Acu and I. Rasa was supported by the project
``Mathematical Methods and Models for Biomedical Applications'' financed by
National Recovery and Resilience Plan PNRR-III-C9-2022-I8.

\strut

\thispagestyle{empty}

~\vfill

\end{document}